\documentclass{amsart}
\usepackage[T1]{fontenc}
\usepackage{lmodern}
\usepackage{textcomp}

\usepackage{amssymb, amsthm, amsmath}
\usepackage{mathtools} 
\usepackage{float}
\usepackage{verbatim}
\usepackage{dsfont}

\usepackage{enumitem}
\usepackage{tikz}
\usepackage{graphicx}
\usepackage{commath}
\usepackage{hyperref}
\usepackage[capitalize]{cleveref}
\hypersetup{pdfpagemode={UseOutlines},
	bookmarksopen=true,
	bookmarksopenlevel=0,
	hypertexnames=false,
	colorlinks=true, 
	citecolor=teal, 
	linkcolor=blue,
	urlcolor=magenta,
	pdfstartview={FitV},
	unicode,
	breaklinks=true,
}
\sloppy

\sloppy
\textwidth=16cm \textheight=23cm
\addtolength{\topmargin}{-40pt} \addtolength{\oddsidemargin}{-2cm}
\addtolength{\evensidemargin}{-2cm}

\makeatletter
\@namedef{subjclassname@2020}{\textup{2020} Mathematics Subject Classification}
\makeatother

\newtheorem{lemma}{Lemma}[section]
\newtheorem{corollary}[lemma]{Corollary}
\newtheorem{proposition}[lemma]{Proposition}
\newtheorem{example}{Example}[section]
\newtheorem{claim}{Claim}
\newtheorem{theorem}[lemma]{Theorem}

\newtheorem{subclaim}{Subclaim}

\newtheorem{thmintro}{Theorem}

\theoremstyle{definition}
\newtheorem{definition}[lemma]{Definition}

\newcommand{\p}{\mathbb{P}}
\newcommand{\e}{\mathbb{E}}

\newcommand{\N}{\mathbb{N}}

\theoremstyle{plain}
\newcommand{\Fill}{\text{Fill}}

\newcommand{\calp}{\mathcal{P}}
\newcommand{\cay}[1]{\operatorname{Cay}(#1)}
\newcommand{\free}[1]{F(#1)} 
\newcommand{\area}[2]{\operatorname{Area}_{#1}(#2)}

\title{Random Dehn function of groups}

\author[Jerónimo García-Mejía]{Jerónimo García-Mejía}
    \address{(Jerónimo García-Mejía) Mathematical Institute, Andrew Wiles Building, Observatory Quarter, University of Oxford, Oxford OX2 6GG, UK}
    \email{jeronimo.garcia-mejia@maths.ox.ac.uk}
    
\author[Antoine Goldsborough]{Antoine Goldsborough}
    \address{(Antoine Goldsborough) Department of Mathematics, Tata Institute of Fundamental Research, Mumbai, India}
    \email{antoinegold10@gmail.com}

\keywords{Dehn functions, filling invariants, random walks, Markov chains, acylindrically hyperbolic}

\subjclass[2020]{Primary: 20P05. Secondary: 20F65, 20F69,20F67}
    
\begin{document}
\begin{abstract}
	In this note, we study the notion of random Dehn function and compute an asymptotic upper bound for finitely presented acylindrically hyperbolic groups whose Dehn function is at most polynomial. By showing that in these cases, if the group is not hyperbolic, then the random Dehn function is strictly smaller than the usual Dehn function we confirm Gromov's intuition albeit in a different model. In fact, we show that in these cases the random Dehn function is at most quadratic.
\end{abstract}

\maketitle

\section{Introduction}

Gromov's seminal program of classifying groups up to quasi-isometries has been a central problem in geometric group theory. Among the quasi-isometry invariants that have emerged, the \emph{Dehn function} of a finitely presented group has played a central role. The Dehn function of a finitely generated group $G$ is a complexity measure of a direct attack of the \emph{word problem} in $G$: it provides an upper bound on the \emph{area of a word} of length at most $n$, that is, on the number of relations that must be applied to a word in the generators of $G$, in order to prove that it represents the trivial element in $G$; the bound is expressed as a function of the length of the word. Geometrically, it provides an upper bound on the area of filling discs for a loop of length at most $n$ in a simply connected Riemannian manifold on which $G$ acts geometrically; the bound is given in terms of the length of the loop. An outstanding feature of Dehn functions is that they characterise hyperbolic groups as precisely those finitely presented groups having subquadratic Dehn function  \cite{gromov1992asymptotic,OlshanskiHyperbolic,BowditchSubquadratic}. 

The Dehn function of a finitely presented group captures the worst-case complexity of the word problem in the group. However, for many algorithmic problems it is natural to inquire about the \emph{average-case} complexity. Gromov defined \cite[5.A6.c]{gromov1992asymptotic} the \emph{averaged Dehn function} of a group $G$ as the average of the area over all words of length at most $n$ that represents the identity in $G$, and claimed that in several cases the asymptotic growth of the averaged Dehn function should be strictly smaller than that of the Dehn function. Gromov's claim was first verified for finitely generated abelian groups by Bogopolski and Ventura \cite{BogopolskiVentura}, and extended to all finitely generated nilpotent groups by Young \cite{young2008averaged}.

Using another model of filling function, which we call \emph{random Dehn function}, see \cref{def:randomdehn}, we confirm Gromov's intuition by showing that for finitely presented (non-hyperbolic) acylindrically hyperbolic groups whose usual Dehn function is at most polynomial, the random Dehn function grows strictly slower than the usual Dehn function. Roughly speaking, the random Dehn function of a group $G$ is defined as the expected values of areas in $G$. Intuitively, this is done by considering a random walk $(Z_n)_n$ on $G$, which we think as a path on $G$. We then join the extreme points of this path, using a (quasi-geodesic) combing $\alpha$ for $G$, to obtain a closed loop on $G$. The random Dehn function $R\delta_{(G, (Z_n), \alpha)}$ of $G$ is then defined by taking the expected values of areas of loops constructed in this way.

Our main results, confirming Gromov's intuition, are the following, with the usual order on functions which we recall later.

\begin{thmintro}\label{thmintro:ah_rdehn}
    Let $G$ be a finitely presented acylindrically hyperbolic group such that its Dehn function $\delta_G$ is at most a polynomial of degree $d \geq 1$. Let $(Z_n)_n$ be a simple random walk on $G$. For every quasi-geodesic combing $\alpha$ of $G$, we have that
    \[
        R\delta_{(G,(Z_n), \alpha)}(n)  \preccurlyeq  \frac{n}{\log (n)}\delta_G(\log(n)) \preccurlyeq n^2.
    \]
    In particular, if $G$ is hyperbolic, then $R\delta_{(G,(Z_n), \alpha)}(n) \preccurlyeq n$. Else, if $d=2$ then $R\delta_{(G,(Z_n), \alpha)}(n) \preccurlyeq n \log n \prec \delta_G (n)$, and if $d>2$, then $R\delta_{(G,(Z_n), \alpha)} \prec \delta_G$.
\end{thmintro}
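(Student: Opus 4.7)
The plan is to decompose the random loop into $O(n/\log n)$ short sub-loops, each of length $O(\log n)$, and fill each using the polynomial bound on $\delta_G$. Let $L_n$ denote the closed loop obtained by concatenating the walk path $Z_0 Z_1 \cdots Z_n$ with the combing $\alpha(Z_n, Z_0)$. Setting $m := \lfloor c \log n \rfloor$ for a suitable constant $c$, introduce mile-markers $p_i := Z_{im}$ for $i = 0, \ldots, k$ with $k \asymp n/\log n$. The walk segment between consecutive mile-markers has length $m$, so its endpoints are at word distance at most $m$, and the combing quasi-geodesic $\alpha(p_{i-1}, p_i)$ has length $O(\log n)$.

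The key geometric input is a sublinear (logarithmic) tracking statement for the random walk on the acylindrically hyperbolic group $G$: with probability at least $1 - e^{-\Omega(n)}$, each $p_i$ lies within word distance $O(\log n)$ of the closing combing $\alpha(Z_0, Z_n)$. Let $q_i$ be a closest point on this combing to $p_i$ and $\sigma_i$ a geodesic segment of length $O(\log n)$ between them. Then $L_n$ decomposes into $k$ sub-loops whose $i$-th boundary is the walk segment from $p_{i-1}$ to $p_i$, the short segment $\sigma_i$, the arc of $\alpha(Z_0, Z_n)$ from $q_i$ to $q_{i-1}$, and $\sigma_{i-1}^{-1}$. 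Using approximate monotonicity of closest-point projections onto a quasi-geodesic (ultimately coming from the hyperbolic space underlying the acylindrical action) together with the linear drift of the walk in $\mathrm{Cay}(G)$, one shows that the $\alpha$-arc length between $q_{i-1}$ and $q_i$ is also $O(\log n)$. Every sub-loop thus has length $O(\log n)$ and area at most $C\,\delta_G(\log n)$, yielding a total area of $O\!\left(\tfrac{n}{\log n}\,\delta_G(\log n)\right)$ on the good event.

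On the complementary event, the area is trivially at most $\delta_G(O(n)) = O(n^d)$; since this event has probability $e^{-\Omega(n)}$, its contribution to the expectation is negligible, so taking expectations gives the first inequality. The second inequality $\tfrac{n}{\log n}\delta_G(\log n) \preccurlyeq n^2$ is immediate from $\delta_G(m) \leq Cm^d$, which gives $\tfrac{n}{\log n}\delta_G(\log n) \leq Cn(\log n)^{d-1} = O(n^2)$ for any fixed $d \geq 1$, and the consequences for $d=1$, $d=2$, and $d>2$ follow by the same comparison.

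The main obstacle is the precise formulation of the tracking statement. For hyperbolic groups, sublinear tracking of the walk by the $(e, Z_n)$-geodesic is classical, but in the acylindrically hyperbolic setting the combing lives in $\mathrm{Cay}(G)$ while the true hyperbolic geometry lives in an auxiliary acylindrical space, so the tracking has to be transported through the orbit map and then translated into tracking along a quasi-geodesic combing at a controlled logarithmic scale. A related technical point is the monotonicity of the projections $q_i$ along $\alpha(Z_0, Z_n)$: this relies on the Morse/contracting character of the directions typically followed by random walks on acylindrically hyperbolic groups, and is what ultimately makes the non-hyperbolic case behave like the hyperbolic one along a random path.
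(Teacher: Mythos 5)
Your high-level strategy is the same as the paper's: place mile-markers along the walk at times spaced $\asymp\log n$ apart, project them to the closing combing line, cut the big loop into $\asymp n/\log n$ sub-loops of length $O(\log n)$, fill each with cost $\delta_G(O(\log n))$, and control the bad event by the worst-case bound $O(n^d)$ and the law of total expectation. However, the two ingredients you yourself flag as ``the main obstacle'' are exactly where the proof lives, and as stated they are either too strong or unsubstantiated, so the proposal has a genuine gap.

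First, the tracking input. You assert that with probability $1-e^{-\Omega(n)}$ every marker is within $O(\log n)$ of the combing line. No such exponential-in-$n$ bound is available or needed. What the paper uses is the deviation inequality of Mathieu--Sisto (and Goldsborough--Sisto for Markov chains): for each $k\le n$, $\mathbb P\bigl[d_S(Z_k,\alpha)\ge l\bigr]\le Ce^{-l/C}$ uniformly over $(D,D)$-quasi-geodesics $\alpha$ from $1$ to $Z_n$ \emph{in the word metric}, so at scale $l\asymp\log n$ a union bound over $k\le n$ only gives a failure probability that is polynomially small, of the form $C_1n^{1-C_1}$. This is fine, but then the error analysis is not automatic: since the worst-case area is $\asymp n^d$, one must choose the constants (the multiple of $\log n$ in the deviation event, and the exponent $r=d+1$ in the linear-progress lemma) so that the polynomial decay beats $n^d$; the paper does this explicitly. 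Your claimed $e^{-\Omega(n)}$ sidesteps this tuning but is not justified, and the worry you raise about transporting tracking through the orbit map from the auxiliary hyperbolic space is resolved in the literature precisely by these word-metric deviation inequalities, which you would need to invoke correctly.

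Second, the monotonicity of the projections $q_i$ along the combing line. You attribute it to the ``Morse/contracting character of the directions typically followed by random walks,'' but the combing line $\alpha(1,Z_n)$ is an \emph{arbitrary} $(D,D)$-quasi-geodesic in a non-hyperbolic Cayley graph; closest-point projections onto it have no a priori contraction or coarse monotonicity properties, so this mechanism does not give a proof. The paper's mechanism is probabilistic, not geometric: since the (tame) walk makes linear progress with exponential decay, sub-walks of length at least $C_3\log n$ make linear progress with probability $1-C_3n^{-r}$ (the generalisation of Sisto's Lemma 4.5), and on that event a backtracking of the projections would force two markers that are $\ge C_3\log n$ apart in time to be within $O(\log n)$ in space, a contradiction. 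This monotonicity is what guarantees that the fillings of the sub-loops actually assemble into a filling of the whole loop; without an argument for it (and with only the quasi-geodesic bound on individual arc lengths, which does follow easily), the decomposition step is incomplete. The remainder of your write-up (the bound $t\asymp n/\log n$, the comparison $\tfrac{n}{\log n}\delta_G(\log n)\preccurlyeq n(\log n)^{d-1}\preccurlyeq n^2$, and the case analysis in $d$) matches the paper and is fine.
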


We note that such a model of random Dehn function has been considered by Sisto in \cite[Theorem 4.10]{SistoTrackingRates} in the case of relatively hyperbolic groups. Sisto shows that the random Dehn function of a  hyperbolic group, relative to proper subgroups with at most polynomial Dehn function, is $\preccurlyeq n\delta(\log(n))$ where $\delta$ is the maximum of the Dehn functions of the peripheral subgroups. 

Random walks belong to the larger class of stochastic processes called \emph{tame quasi-homogeneous Markov chains}, whose definition we recall in \cref{sec:preliminaries}. These processes are invariant under quasi-isometries. In the case of tame Markov chains, we obtain the following result.

\begin{thmintro}\label{thmintro:tame_MC}
Let $G$ be one of the following:
\begin{itemize}
    \item A non-elementary hyperbolic group;
    \item a non-elementary relatively hyperbolic group; 
    \item an acylindrically hyperbolic $3$-manifold group;
    \item the fundamental group of a graph of groups with nilpotent undistorted edge groups and at most polynomial Dehn function
\end{itemize}
Let $(w_n^\ast)$ be a tame Markov chain on $G$ and $\alpha$ any quasi-geodesic combing of $G$, then we have that 
	\[
	R\delta_{(G,(w^*_n), \alpha)}(n) \preccurlyeq  \frac{n}{\log (n)}\delta_G(\log(n)) \preccurlyeq n^2. 
	\]
In particular, if $G$ is not hyperbolic, then $R\delta_{(G,(w^*_n), \alpha)}(n) \preccurlyeq n \log n \prec \delta_G(n)$.
\end{thmintro}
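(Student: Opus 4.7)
The plan is to follow the skeleton of the proof of \cref{thmintro:ah_rdehn}, replacing the simple random walk by a tame Markov chain and establishing, in each of the four listed classes, a \emph{sublinear tracking} estimate: the sample path $(w^*_k)$ shadows a geodesic from $e$ to $w^*_n$ up to an error $O(\log n)$, with probability $1 - O(n^{-K})$ for every fixed $K$. Once such tracking is in hand, the loop $\ell_n$ obtained by concatenating the trajectory $(w^*_0,\dots, w^*_n)$ with the combing segment $\alpha([e,w^*_n])^{-1}$ decomposes into roughly $n/\log n$ subloops of perimeter $O(\log n)$, and summing $\delta_G(O(\log n))$ over them yields the announced bound $\tfrac{n}{\log n}\delta_G(\log n)$.

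The first step is to assemble the required tracking in each of the four cases. For a non-elementary hyperbolic group, a tame Markov chain has positive drift and an exponential deviation inequality from geodesics (of Goldsborough--Sisto type); since $\delta_G$ is linear, this gives $R\delta \preccurlyeq n$ directly. For a non-elementary relatively hyperbolic group, the required estimate is essentially \cite[Theorem 4.10]{SistoTrackingRates}, which tracks the chain by a relative geodesic and controls the depths of its peripheral excursions. For an acylindrically hyperbolic $3$-manifold group we invoke the JSJ decomposition to reduce to either the hyperbolic or the relatively hyperbolic case, transporting the tameness of the chain through the resulting quasi-isometries. For a graph of groups $G$ with nilpotent undistorted edge groups and polynomial Dehn function, the key ingredient is the acylindrical action on the Bass--Serre tree $T$: the projection of the trajectory on $T$ is tracked by a tree geodesic, each vertex-group sojourn has logarithmic length, and inside each vertex group the nilpotent averaged Dehn estimates of \cite{young2008averaged,BogopolskiVentura} provide the additional control.

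Given tracking, the area estimate is uniform. Set $L := C\log n$, let $\gamma$ be a geodesic from $e$ to $w^*_n$, and let $E_n$ be the event that the chain stays within $L$ of $\gamma$. On $E_n$ we subdivide $[0,n]$ into $\lceil n/L\rceil$ intervals of length $L$; each produces a closed loop of perimeter $O(L)$ (two tracking chords, a subarc of $\gamma$, and a subarc of the trajectory), whose area is at most $\delta_G(O(L))\asymp\delta_G(\log n)$. Summing,
\[
\area{G}{\ell_n}\ \preccurlyeq\ \frac{n}{\log n}\,\delta_G(\log n).
\]
On the complement $E_n^c$ we use the trivial bound $\delta_G(n)\preccurlyeq n^d$, whose contribution to the expectation is absorbed by $\p(E_n^c)$. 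The inequality $\preccurlyeq n^2$ is immediate from the polynomial bound on $\delta_G$, and the strict separation $R\delta \prec \delta_G$ in the non-hyperbolic case follows because $\delta_G(n)\succeq n^2$ for any non-hyperbolic finitely presented group, making $\tfrac{n}{\log n}\delta_G(\log n)$ polylogarithmically smaller than $\delta_G(n)$.

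The principal obstacle lies in the last two classes: tameness of the ambient chain does not automatically descend to a usable chain in a JSJ piece or in a vertex group, so one has to verify that the projection of $(w^*_k)$ to the hyperbolic factor (respectively to the Bass--Serre tree) still exhibits positive linear drift and that the induced sojourn in the elliptic direction remains logarithmic in $n$. Packaging these reductions into a single sublinear tracking lemma, valid uniformly across the four families, is the most delicate part of the argument and is precisely what forces the theorem to be stated for these classes rather than for all acylindrically hyperbolic groups.
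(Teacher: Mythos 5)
Your area-decomposition step (subdivide $[0,n]$ into blocks of length $C\log n$, fill each subloop with area $\delta_G(O(\log n))$, handle the bad event by the trivial polynomial bound) is essentially the paper's argument in \cref{thm:ah_rdehn}. The gap is in how you obtain the probabilistic input, and it is a real one. The paper does not prove tracking case by case: it quotes the deviation inequalities for \emph{tame Markov chains} from \cite[Theorem 1.4]{GS21} (together with \cite{MathieuSisto}), which hold under Assumptions 1/2 of \cite{GS21} and cover all four classes at once --- acylindrically hyperbolic $3$-manifold groups fall under these assumptions (hyperbolic, relatively hyperbolic, or graph of groups with $\mathbb{Z}^2$, hence nilpotent undistorted, edge groups), and the only extra observation needed is that they have at most quadratic Dehn function. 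Your substitutes do not work as stated: \cite[Theorem 4.10]{SistoTrackingRates} (and Sisto's tracking results generally) are for random walks, not tame Markov chains, so they cannot be invoked for $(w_n^*)$; the JSJ reduction for $3$-manifold groups ``transporting tameness through quasi-isometries'' is not available, since the geometric pieces are not quasi-isometric to $G$ and the projection of a Markov chain to a piece or to the Bass--Serre tree is in general neither Markov nor tame (you acknowledge this yourself, which means the key lemma of your plan is left unproved); and the appeal to the nilpotent averaged Dehn function results of Young and Bogopolski--Ventura is off target --- in the graph-of-groups case it is the edge groups that are nilpotent, and no averaged filling inside vertex groups is needed, only the polynomial bound on $\delta_G$ evaluated at scale $\log n$.

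A second, independent problem is that you track a \emph{geodesic} $\gamma$ from $e$ to $w_n^*$, while the loop you must fill is bounded by the trajectory and the \emph{combing} line, which is only a $(D,D)$-quasi-geodesic. In the non-hyperbolic groups covered by the statement, closeness to $\gamma$ gives no control on the distance to $\alpha(w_n^*)$ (quasi-geodesics with the same endpoints can diverge arbitrarily), so your subloops built from chords to $\gamma$ fill the wrong loop, and the leftover ``bigon'' between $\gamma$ and the combing line could a priori have area comparable to $\delta_G(n)$. This is exactly why \cref{thm:small_devns} is stated as a deviation bound from \emph{all} $(D,D)$-quasi-geodesics from $1$ to $Z_n$ (so that it applies to the combing line of an arbitrary quasi-geodesic combing), and why the paper also needs \cref{lem:subwalks_linear_progress} to guarantee that logarithmically spaced sample points have projections that genuinely advance along the combing line. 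To repair your argument you should replace your case-by-case tracking by the quoted deviation inequalities for tame Markov chains and run the filling against the combing line itself, as in \cref{thm:ah_rdehn}.
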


A subfamily of tame Markov chains, which is still invariant by quasi-isometry and includes simple random walks are \emph{tame quasi-homogeneous Markov chains}. If we consider these, then we obtain the following result.

\begin{thmintro}\label{thmintro:cor}
Let $G$ be one of the following:
\begin{itemize}
    \item the mapping class group of a finite type surface;
    \item an extra-large Artin group; 
    \item or more generally a well-behaved hierarchically hyperbolic group (in the sense of \cite{InducedQIs}).
\end{itemize}
Let $(w_n^\ast)$ be a tame quasi-homogeneous Markov chain on $G$. Then, for any quasi-geodesic combing $\alpha$ of $G$ we have that:
	\[
		R\delta_{(G,(w^*_n), \alpha)}(n) \preccurlyeq  \frac{n}{\log(n)} \delta_G(\log (n)) \preccurlyeq n^2.
	\]
In particular, if $G$ is not hyperbolic, then $R\delta_{(G,(w^*_n), \alpha)}(n) \preccurlyeq n \log n \prec \delta_G(n)$.
\end{thmintro}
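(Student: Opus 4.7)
My plan is to reduce Theorem~C to the same master inequality used for Theorems~A and~B, and then verify the three inputs that this inequality requires for each of the classes listed in the statement. The common mechanism is as follows. Given a finitely presented group $G$ with (i) Dehn function $\delta_G$ at most a polynomial of degree $d\geq 1$, (ii) a quasi-geodesic combing $\alpha$, and (iii) a Markov chain $(w_n^\ast)$ whose trajectory tracks a quasi-geodesic from the basepoint with error $O(\log n)$ and tail bounds strong enough to survive in expectation, one closes up the random path using $\alpha$, subdivides the resulting loop into $\asymp n/\log n$ sub-loops of perimeter $O(\log n)$, fills each using $\delta_G(\log n)$ relations, and averages. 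The outcome is exactly $R\delta_{(G,(w_n^\ast),\alpha)}(n) \preccurlyeq \frac{n}{\log n}\,\delta_G(\log n)$.

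Inputs (i) and (ii) are standard for all three classes: mapping class groups have quadratic Dehn function and a biautomatic (hence quasi-geodesic) combing; extra-large Artin groups have quadratic Dehn function and a quasi-geodesic combing coming from their hierarchically hyperbolic structure; and the well-behaved HHGs of \cite{InducedQIs} come equipped with a hierarchical combing and have polynomial Dehn function. Hence the substantive step is (iii), and this is precisely where the \emph{quasi-homogeneous} hypothesis becomes essential: it supplies enough uniformity of the transition kernel to run deviation inequalities along hierarchical quasi-geodesics. I would appeal to the deviation estimates developed for tame quasi-homogeneous Markov chains in the HHG framework of \cite{InducedQIs} to obtain the required logarithmic tracking; specialising these to the HHG structures on the mapping class group and on extra-large Artin groups then supplies (iii) in those cases as well.

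With (i)--(iii) in place, the proof reduces to the filling-and-averaging scheme already used for Theorems~A and~B, applied to the closed-up quasi-geodesic polygon built from the random walk. The ``in particular'' clause is then immediate: a finitely presented non-hyperbolic group $G$ has $\delta_G(n) \succcurlyeq n^2$, whence $\frac{n}{\log n}\delta_G(\log n) \preccurlyeq n\log n \prec \delta_G(n)$ for the classes under consideration, where the Dehn function is in fact quadratic. The hardest part, in my view, will be establishing (iii) uniformly: the deviation inequality should be derived once from the HHG data and then transported to the MCG and extra-large Artin group cases via their HHG structures, rather than being re-proved in a bespoke random-walk analysis for each class.
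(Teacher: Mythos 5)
Your proposal is correct and follows essentially the same route as the paper: the paper likewise feeds the deviation inequalities for tame quasi-homogeneous Markov chains on well-behaved hierarchically hyperbolic groups (its \cref{thm:small_devns}, obtained via \cite{InducedQIs}) into the general filling-and-averaging theorem (\cref{thm:ah_rdehn}), and uses the at most quadratic Dehn function of HHGs \cite{BHS1} together with the subquadratic gap to get the ``in particular'' clause. The only cosmetic difference is that you verify the existence of combings and quadratic Dehn functions case by case for mapping class groups and extra-large Artin groups, whereas the paper treats these uniformly as instances of well-behaved HHGs (and the combing is in any case part of the hypotheses).
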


The proof of both of these theorems relies on \emph{deviation inequalities} for the appropriate stochastic processes. Roughly, these inequalities state that the probability that the distance between the position of a random walk (or tame quasi-homogeneous Markov chain) after $k$ steps and a quasi-geodesic between the starting point and the position after $n$ steps is at least $t$ decays exponentially in $t$, see Theorem \ref{thm:small_devns} for the exact statement. We then use these inequalities to deduce that our sample path coming from the stochastic process, with high probability, stays close to a quasi-geodesic which is enough to bound the area of the filling and hence give an estimate for the random Dehn function.

It is worth noting that another quasi-isometry invariant based on tame Markov chains has been considered by the second named author and A. Sisto in \cite{rdiv1, rdiv2}. Instead of looking at the Dehn function of generic points, there the authors look at the divergence of generic points.

\subsection*{Acknowledgments}
The authors thank the organisers of the thematic program on geometric group theory held at the Montreal Centre de Recherches Scientifiques in Spring 2023, where part of this work took place. The authors also thank Kunal Chawla and Alessandro Sisto for interesting discussions. The first author gratefully acknowledge funding by the DFG 281869850 (RTG 2229) and the Royal Society of Great Britain. The second author was supported by the EPSRC-UKRI studentship EP/V520044/1 and the LMS Early Career Fellowship. The authors are grateful to the referee for their valuable feedback and comments. Finally, we are thankful to Robert Young for valuable comments and for pointing out an imprecision on a previous version; correcting it lead to a strengthening of our results.

\section{Preliminaries}\label{sec:preliminaries}
In this section, we recall some notions from geometric group theory as well as from the theory of random walks and Markov chains. We then define the notion of random Dehn function of a group. In \cref{prop:randomDehn_QIinvariant} we show that in the setting of non-amenable groups, by considering the class of tame Markov chains, the random Dehn function is a quasi-isometric invariant.

\subsection{Acylindrically hyperbolic groups} 
We recall the definition of an acylindrically hyperbolic groups, which is a generalisation of the notion of a hyperbolic group, and we refer to \cite{Osin} and the references therein for more properties and examples of this class of groups.

\begin{definition}
    We say that a group acts \textit{acylindrically} on a metric space $X$ if for every $R \geq 0$, there exist $N > 0$, $L > 0$ such that for every $x,y \in X$ with $d_X(x,y) \geq L$, we have \[ |\{g \in G \ | \ d_X(x,gx) \leq R, \ d_X(y,gy) \leq R \}| \leq N. \]
    
    A group is \textit{acylindrically hyperbolic} if it is non-virtually cyclic and acts acylindrically on a hyperbolic geodesic metric space with unbounded orbits. 
\end{definition}

\begin{example}[\cite{Osin}] The following groups are acylindrically hyperbolic
    \begin{itemize}
        \item Non-elementary hyperbolic groups;
        \item Non virtually cyclic relatively hyperbolic groups with proper peripheral subgroups;
        \item All but a finite number of mapping class groups of connected oriented surfaces;
        \item $Out(F_n)$ for $n \geq 2$;
        \item Non virtually cyclic groups acting properly on a CAT(0) space with a rank-1 element.
    \end{itemize}
\end{example}

\subsection{Markov chains}

We usually denote a Markov chain on a group by $(w_n^\ast)_n$, where $w_n^p$ denotes the position of the Markov chain starting at $p$ after $n$ steps and the quantity $\mathbb P [w_n^o = g]$ denotes the probability of getting from $o$ to $g$ in $n$ steps. We recall some definitions from \cite{GS21} and \cite{InducedQIs} and refer to these for more details.

\begin{definition}[Tame Markov chain]\label{def:tame-markov-chain}
    For a finitely generated group $G$ with finite generating set $S$, with word metric $d_S$, we say that a Markov chain $(w_n^\ast)$ on $G$ is \emph{tame} if it satisfies the following conditions:
    \begin{enumerate}
        \item \emph{Bounded jumps}: There exists a $K>0$ such that for all $g \in G, n \in \mathbb N$ we have that $d_S(w_n^p, w^p_{n+1}) \leq K$,
        \item \emph{Non-amenability}: there exists $A>0$ and $\rho <1$ such that for all $g,h \in G$ and $n \geq 0$ we have $\mathbb P [w_n^g = h] \leq A\rho^n$,
        \item \emph{Irreducibility}: For each $s \in G$ there exists constants $\epsilon_s, K_s >0$ such that for all $g \in G$ we have $\mathbb P [w_k^g = gs] \geq \varepsilon_s$ for some $k \leq K_s$.
    \end{enumerate}
\end{definition}

\begin{definition}
\label{defn:push-forward}
    For a bijection $f: G \rightarrow H$ and a Markov chain $(w^\ast_n)$ on $G$, there is a natural Markov chain in $H$ induced by $f$ called the \emph{push-forward Markov chain} on $H$, which is denoted by $f_\#(w^p_n)$. This is the Markov chain such that $\mathbb P [f_\#(w^o_n)=h] = \mathbb P [w_n^o = f^{-1}(h)]$ for all $h \in H$, $o \in G$ and $n \geq 0$.
\end{definition}

Another property of Markov chains that we consider is the following. 

\begin{definition}
    A Markov chain $(w^p_n)_n$ in $G$ is \emph{quasi-homogeneous} if for some $D > 0$ it has the following property. For every $g,h \in G$ there is a bijection $f: G \rightarrow G$ such that $f(g) = h$ and for all $n \in \mathbb N$ and all $q \in G$ we have that $f_\#(w^q_n) = (w_n^{f(q)})$.
\end{definition}

The following result shows that appropriate random walks, including simple random walks, are tame quasi-homogeneous Markov chains.
\begin{lemma}[{\cite[Lemma 2.8]{GS21} \cite{InducedQIs}}]
    \label{lem:random_walks_are_tame}
    Let $G$ be a non-amenable group and let $(z_n)_n$ be a random walk on $G$ driven by $\mu$ such that the support of $\mu$ is finite and generates $G$ as a semigroup. Then $(z_n)_n$ is a tame quasi-homogeneous Markov chain.
\end{lemma}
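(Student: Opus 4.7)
The statement to verify has four parts: the three tameness conditions from \cref{def:tame-markov-chain} and quasi-homogeneity. Since the result is attributed to earlier work, the plan is to indicate how each property follows from standard facts about random walks on groups.

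\textbf{Bounded jumps.} For a random walk $(z_n)$ driven by $\mu$, one has $z_{n+1}^p = z_n^p \cdot s_{n+1}$ where $s_{n+1}$ is an i.i.d.\ $\mu$-distributed increment. Since $\operatorname{supp}(\mu)$ is finite, $K := \max\{d_S(e,s) : s \in \operatorname{supp}(\mu)\}$ is finite and bounds every jump.

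\textbf{Non-amenability.} This is the step that requires the group-theoretic hypothesis. By Kesten's criterion (together with its extension to non-symmetric finitely supported measures whose support generates the group as a semigroup), non-amenability of $G$ is equivalent to the spectral radius of the convolution operator associated to $\mu$ being strictly less than one. Hence there exist $A > 0$ and $\rho < 1$ such that $\mathbb{P}[z_n^g = h] = \mu^{\ast n}(g^{-1}h) \leq A\rho^n$ uniformly in $g, h \in G$. This is the only place where non-amenability is used, and it is the one substantive input; the remaining verifications are essentially formal.

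\textbf{Irreducibility.} Since $\operatorname{supp}(\mu)$ generates $G$ as a semigroup, for every $s \in G$ there exist $k_s \in \mathbb{N}$ and $s_1,\dots,s_{k_s} \in \operatorname{supp}(\mu)$ with $s = s_1 \cdots s_{k_s}$. Setting $K_s := k_s$ and $\varepsilon_s := \prod_{i=1}^{k_s} \mu(s_i) > 0$, the probability that the walk makes exactly this sequence of increments is at least $\varepsilon_s$, and by the multiplicative structure of random walks this estimate is independent of the starting point, giving $\mathbb{P}[z_{K_s}^g = gs] \geq \varepsilon_s$.

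\textbf{Quasi-homogeneity.} Given $g, h \in G$, define the bijection $f : G \to G$ by left translation $f(x) = hg^{-1}x$; clearly $f(g) = h$. For a random walk, $z_n^q = q \cdot s_1 \cdots s_n$, so $f(z_n^q) = hg^{-1} q s_1 \cdots s_n = z_n^{f(q)}$ as random variables, hence in particular in distribution. Thus $f_{\#}(z_n^q) = (z_n^{f(q)})$, as required, with $D$ any positive constant (the bijection $f$ in fact sends $g$ to $h$ exactly).

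The only non-routine ingredient is the non-amenability estimate; the rest is bookkeeping. Once all four properties are verified, $(z_n)$ is a tame quasi-homogeneous Markov chain in the sense of \cref{def:tame-markov-chain}.
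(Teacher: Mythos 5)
Your proof is correct and is essentially the standard argument that the paper itself delegates to the citations \cite[Lemma 2.8]{GS21} and \cite{InducedQIs}: bounded jumps and irreducibility from finiteness and semigroup generation of the support, quasi-homogeneity via left translations, and the exponential bound from Kesten--Day. The only slight imprecision is in the non-amenability step, where for a non-symmetric $\mu$ the uniform bound $\mathbb{P}[z_n^g=h]=\mu^{\ast n}(g^{-1}h)\leq \|\lambda(\mu)^n\|_{\ell^2\to\ell^2}$ is most cleanly run through the operator norm (Day's theorem gives $\|\lambda(\mu)\|<1$ when $G$ is non-amenable and the support generates), rather than the spectral radius per se; this is a cosmetic fix, not a gap.
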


\subsection{Dehn and random Dehn functions}

\begin{definition}
    Let $S$ be a non-empty set and $\free S$ the free group on $S$. Let $G$ be a group and suppose it is given by a finite presentation $\calp = \langle S|R \rangle$. We denote by $|\cdot|_S$ the \emph{word-length} with respect to $S$ and by $d_S$ the induced metric. We say that a word $w$ in $S$ is \emph{null-homotopic} in $G$ if it represents the identity in $G$ and write $w =_\calp 1$. We define the \emph{area} of a null-homotopic word $w$ in $G$ with respect to $\calp$ as the positive integer defined by
    \[
    \area{\calp}{w} := \min \{ \ell \in \mathbb{N} \mid w =_\calp \prod_{i = 1}^{\ell} a _{i} r _{i} ^{\epsilon _{i}} a _{i} ^{-1}, \ \text{with} \ a_i \in \free{S}, \ r_i \in R, \ \text{and} \ \epsilon_i \in \{\pm 1\} \}.
    \]
    The \emph{Dehn function of $\calp$} is the function $\delta_\calp: \N \rightarrow \N$ defined as 
    \[
    \delta_\calp (n) := \max\{\area{\calp}{w} \mid w =_\calp 1, \ |w|_S \leq n \}.
    \]
\end{definition}

Changing the presentation of $G$ does not alter the asymptotic behaviour of the Dehn function of the presentations. Concretely, the \emph{Dehn function of $G$} is well-defined up to $\asymp$- equivalence of functions \footnote{Recall, two functions $f,g: [0, \infty) \rightarrow [0, \infty)$ are said to be \emph{$\asymp$-equivalent} if $f \preccurlyeq g$ and $g \preccurlyeq f$, where $f \preccurlyeq g$ means that there exists constants $A,B >0$ and $C,D,E \geq 0$ such that $f(n) \leq Ag(Bn +C) + Dn +E$ for all $n \geq 0$.}, so we write $\delta_G = \delta_\calp$. In fact, the Dehn function of a group is a well-defined quasi-isometry invariant up $\asymp$-equivalence (for details see \cite{AlonsoQI} and \cite{BridsonHaefliger}).

For a group $G$ generated by $S$, we can assume without loss of generality that $S$ is symmetric, namely that $S=S^{-1}$. There is a natural surjective map, called \emph{evaluation map}, from the free monoid generated by $S$, denoted by $S^\ast$, to $G$ defined by sending each element in $S^\ast$, a \emph{word in $S$}, into the element it represents in $G$.
\begin{definition}
    A morphism of monoids $\alpha: G \rightarrow S^\ast$ is called a \emph{combing for $G$} if it is a (set-theoretic) right inverse of the evaluation map. Geometrically, given $g \in G$ we can identify $\alpha(g)$ with a continuous path $\alpha_g \colon [0,l] \rightarrow \cay{G,S}$ connecting $1$ to $g$ which we call the \emph{combing line of $g$}.
\end{definition} 

In other words, a combing of $G$ is a choice of normal form for elements in $G$.

\begin{definition}
    Let $D \geq 1$. We say that a combing $\alpha \colon G \rightarrow S^\ast$ is a \textit{$D$-quasi-geodesic combing} if for every $g \in G$ the combing line $\alpha_g$ is a $(D,D)$-quasi-geodesic. We simply say that $\alpha$ is a \emph{quasi-geodesic combing} if there is some $D \geq 1$ such that $\alpha$ is a $D$-quasi-geodesic combing.
\end{definition}

\begin{definition}\label{def:fill}
    Let $G$ be a finitely presented groups with finite presentation $\calp = \langle S|R \rangle$ and $\alpha: G \rightarrow S^\ast$ be a combing for $G$. For $x_0, \ldots, x_n \in G$ let \[\alpha_i = \alpha(x_{i+1}x_i^{-1}) \quad \text{for} \  0 \leq i \leq n-1, \ \text{and} \quad \alpha_n = \alpha(x_n).\] For the null-homotopic word in $S$ defined as  $W_{x_1, \ldots, x_n} = \alpha_1 \cdot \ldots \cdot \alpha_{n-1} \cdot \alpha_n^{-1}$
    we denote 
    \[
    \Fill_{\calp, \alpha}(x_1, \ldots, x_n) = \area{\calp}{W_{x_1, \ldots, x_n}}.
    \]
\end{definition}

\begin{definition}[Random Dehn function]\label{def:randomdehn}
    Let $G$ be a finitely presented group with finite presentation $\calp = \langle S|R \rangle$ and let $\alpha: G \rightarrow S^\ast$ be a quasi-geodesic combing for $G$. Let be $(w_n^\ast)$ a stochastic process on $G$. For every starting point $o \in G$ of the stochastic process set 
    \[
    x_{o,1} = w_0^o, \ldots, x_{o,n} = w_n^o, \quad \alpha_{o,i} = \alpha(x_{o,i+1} x_{o,i}^{-1}) \quad \text{for} \ 0 \leq i \leq n-1, \ \text{and} \quad \alpha_{o,n} = \alpha(x_{o,n}).
    \]
    The \emph{random Dehn function of $\calp$ with respect to $(w_n^\ast)$ and $\alpha$} is the function $R\delta_{(\calp,(w_n^\ast), \alpha)} \colon \N \rightarrow \N$ defined as
    \[
    R\delta_{(\calp,(w_n^\ast), \alpha)}(n) = \sup_{o \in G} \mathbb E [ \Fill_{\calp, \alpha}(x_{o,1}, \ldots, x_{o,n})].
    \]
\end{definition}
The stochastic process $ (w^\ast_{n} )_n$ that we consider is either a random walk driven by an admissible measure, or a tame Markov chain. The reason for looking at tame quasi-homogeneous Markov chains instead of just random walks is that together with quasi-geodesic combings these make the notion of random Dehn function invariant under quasi-isometries, as we now show.

\begin{proposition}\label{prop:randomDehn_QIinvariant} Let $G$ and $H$ be two finitely generated non-amenable groups. Suppose that $H$ is quasi-isometric to $G$. Then for all tame Markov chains $(z_n^{*})_n$ on $G$ and quasi-geodesic combing $\alpha$ on $G$, there exists a tame Markov chain $(w_n^{*})_n$ on $H$ and quasi-geodesic combing $\beta $ on $H$ such that for all $p \in G$
    \[ 
    R\delta_{(G,(z_n^\ast),\alpha)}(n) \asymp R\delta_{(H,(w_n^\ast),\beta)}(n). 
    \]
\end{proposition}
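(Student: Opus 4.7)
The plan is to produce the Markov chain $(w_n^\ast)$ and combing $\beta$ on $H$ as push-forwards of $(z_n^\ast)$ and $\alpha$ along a quasi-isometric bijection $f\colon G\to H$, and then compare the two random Dehn functions using quasi-isometry invariance of the ordinary Dehn function. Since both groups are non-amenable and quasi-isometric, Whyte's theorem on bi-Lipschitz equivalence of non-amenable QI groups yields a bijective quasi-isometry $f$, and after post-composing with a left translation we may assume $f(1_G)=1_H$.

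Set $(w_n^\ast):=f_\#(z_n^\ast)$ as in \cref{defn:push-forward}. Tameness transfers from $G$ to $H$: bounded jumps follow from the Lipschitz bound on $f$; non-amenability is immediate because $f$ is a bijection, so transition probabilities agree; irreducibility follows from that of $(z_n^\ast)$ together with bounded jumps, an argument essentially contained in \cite{GS21,InducedQIs}. For the combing, given $h\in H$ let $g=f^{-1}(h)$ and list the vertex sequence $1_G=v_0,\dots,v_\ell=g$ of the combing line $\alpha_g$. The points $1_H=f(v_0),\dots,f(v_\ell)=h$ lie at pairwise bounded distance in $\cay{H,T}$ (for $T$ a finite symmetric generating set of $H$), so joining consecutive ones by geodesics and labelling the resulting edge sequence defines $\beta(h)\in T^\ast$. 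Because $\alpha_g$ is a $(D,D)$-quasi-geodesic and $f$ is a quasi-isometry, $\beta$ is a quasi-geodesic combing.

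Now fix $o\in G$ and set $x_i=z_i^o$, $y_i=f(x_i)$; then $(y_i)$ is distributed as $(w_i^{f(o)})$. The null-homotopic word $W^G=W_{x_1,\dots,x_n}$ in $S$ traces a loop in $\cay{G,S}$ built from the combing lines $\alpha(x_{i+1}x_i^{-1})$, while the corresponding word $W^H=W_{y_1,\dots,y_n}$ traces a loop in $\cay{H,T}$. By construction of $\beta$, the $f$-image of the $G$-loop lies at bounded Hausdorff distance from the $H$-loop, and the two loops have lengths comparable up to multiplicative and additive constants. Quasi-isometry invariance of the Dehn function applied to $f$ (\cite{AlonsoQI,BridsonHaefliger}) then yields $\area{\mathcal P_H}{W^H}\asymp\area{\mathcal P_G}{W^G}$, with constants independent of $o$ and of the sample path. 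Taking expectations and then suprema over $o$ gives the required $\asymp$-equivalence of random Dehn functions.

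The main technical obstacle is in the last step: because $f$ is not a homomorphism, the increment $y_{i+1}y_i^{-1}\in H$ does not equal $f(x_{i+1}x_i^{-1})$. One must verify that the combing line $\beta(y_{i+1}y_i^{-1})$ and the $f$-image of $\alpha(x_{i+1}x_i^{-1})$ fellow-travel up to a uniform constant, so that the $H$-loop differs from the $f$-push of the $G$-loop only by a perturbation of bounded size at each of the $n$ increments. This perturbation contributes at most $O(n)$ to the area, which is absorbed by the additive linear term in the definition of $\asymp$, so the asymptotic equivalence survives uniformly in $o$ and $n$.
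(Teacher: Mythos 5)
Your argument follows the paper's proof essentially verbatim: Whyte's theorem to replace the quasi-isometry by a bijective one, the push-forward tame Markov chain $f_\#(z_n^\ast)$, the pushed-forward combing $\beta$, and then quasi-isometry invariance of the (areas underlying the) Dehn function to compare the two fillings. In fact you make explicit two points the paper leaves implicit --- the honest construction of $\beta$ as a word in $T^\ast$ via geodesic interpolation, and the fact that the increments $y_{i+1}y_i^{-1}$ are not images of the increments $x_{i+1}x_i^{-1}$ but differ only by uniformly bounded perturbations absorbed in the additive linear term --- so the proposal is correct and in the same spirit.
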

\begin{proof}
   By \cite[Theorem 4.1]{Whyte}, any quasi-isometry between finitely generated non-amenable groups is at bounded distance from a bijective quasi-isometry $f\colon G \to H$. We let $w_n^* = f_{\#}(z_n^*)$ be the push-forward of $z_n^*$ by $f$ as defined in \cref{defn:push-forward}, then by \cite[Lemma 2.8]{GS21} and \cite[Remark 2.9]{InducedQIs} this new stochastic process is a tame quasi-homogeneous Markov chain on $H$. Let $\beta = f \circ \alpha$, i.e. for all $x,y \in H$ let $\beta(x,y) = f(\alpha(f^{-1}(x), f^{-1}(y))$. The rest of the proof follows directly from the fact that the usual Dehn function is a quasi-isometric invariant. 
\end{proof}

\section{Bounding the random Dehn function}

We first start with a result that shows that if a Markov chain makes linear progress then sub-walks of length at least logarithmic length also make linear progress. This is a generalisation of a result from \cite{SistoTrackingRates}. To state the result we recall the following definition.

\begin{definition}[Linear progress with exponential decay]
We say that a Markov chain $(w^{*}_n)$ on $G$ \emph{makes linear progress with exponential decay in $G$} if there exists a constant $K >0$ such that for all $o \in G$ and $k$  we have
\begin{equation}\label{eq:exponential-decay}
   		\mathbb P \left[d_S(o,w_k^{o}) < \frac{k}{K} \right] \leq K e^{-k/K}.
   \end{equation}
\end{definition}

\begin{lemma}[Generalisation of {\cite[Lemma 4.5]{SistoTrackingRates}}]
\label{lem:subwalks_linear_progress}
 Let $(w^{*}_n)$ be a Markov chain on $G$ that makes linear progress with exponential decay in $G$, see \eqref{eq:exponential-decay}. Then for each $r\geq 1$, there exists $C_3>0$ so that for each $n \geq 1$ and $o \in G$ we have:
    \[
    \mathbb P \left[ \exists i,j \leq n : \vert i-j\vert \geq C_3\log(n), \ d_X(w^{o}_i, w^{o}_j) \leq \frac{\vert i-j \vert}{C_3} \right] \leq C_3 n^{-r}.
    \]
\end{lemma}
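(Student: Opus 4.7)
The plan is to prove the bound by applying the linear progress hypothesis to each sub-walk $(w_i^o,\ldots,w_j^o)$ and then taking a union bound over all admissible pairs $(i,j)$.

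For the per-pair estimate, I would fix $i<j$, set $k=j-i$, and invoke the Markov property: conditioned on $w_i^o=g$, the random variable $w_j^o$ has the same distribution as $w_k^g$. The linear progress hypothesis \eqref{eq:exponential-decay} then yields
\[
\mathbb P\bigl[d_S(w_i^o,w_j^o)\leq k/K \bigm| w_i^o=g\bigr]=\mathbb P\bigl[d_S(g,w_k^g)\leq k/K\bigr]\leq K e^{-k/K},
\]
and averaging over $g$ gives the same bound unconditionally. Assuming $C_3\geq K$ ensures that $|i-j|/C_3\leq |i-j|/K$, so the event $\{d_S(w_i^o,w_j^o)\leq |i-j|/C_3\}$ is contained in $\{d_S(w_i^o,w_j^o)\leq |i-j|/K\}$, and the exponential bound applies in the form we need.

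For the union bound, I would observe that there are at most $n^2$ ordered pairs $(i,j)$ with $0\leq i<j\leq n$, and that every pair contributing to the stated event satisfies $|i-j|\geq C_3\log(n)$, hence contributes at most $K e^{-C_3\log(n)/K}=K n^{-C_3/K}$. Summing (and absorbing the factor $2$ from interchanging $i$ and $j$) gives a total probability at most $2K n^{2-C_3/K}$. Choosing $C_3\geq K(r+2)$ together with $C_3\geq K$ and $C_3\geq 2K$ then forces $2K n^{2-C_3/K}\leq C_3 n^{-r}$, which is the desired estimate.

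I do not anticipate any serious obstacle: the argument is simply a conditional application of the hypothesis combined with a union bound. The only delicate point is bookkeeping the constants so that a single $C_3$ simultaneously guarantees the containment of events, the required exponential decay rate, and the prefactor on the right-hand side.
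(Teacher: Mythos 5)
Your proposal is correct and follows essentially the same route as the paper: the per-pair estimate via the Markov property (conditioning on $w_i^o=g$ and applying the linear progress hypothesis to $w_{j-i}^g$), followed by a union bound over pairs with $|i-j|\geq C_3\log(n)$; the paper merely organizes the union bound by summing over $i$ and the gap $l=|i-j|$ (taking $C_3=(r+1)K$) instead of crudely counting $n^2$ pairs, which is only a difference in constant bookkeeping.
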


\begin{proof}[Proof of \cref{lem:subwalks_linear_progress}]
   The proof is very similar to the proof in the case of a simple random walk proved in \cite[Lemma 4.5]{SistoTrackingRates}.
   Let $K$ be the constant from linear progress with exponential decay of the Markov chain in $G$, see \eqref{eq:exponential-decay}, and let $r \geq 1$ and let $C_3=(r+1)K$. We have, for all $o \in G$ and $i<j$:
   \[ 
   \mathbb P \left[d_X(w^{o}_i, w^{o}_j) \leq \frac{\vert j-i \vert}{K}  \right]= \sum_{g \in G}\mathbb P \left[d_X(g, w^g_{j-i}) \leq \frac{\vert j-i \vert}{K} \,\middle\vert\, w_i^o=g\right]\mathbb P \left[w_i^o=g\right] \leq Ke^{-\vert j-i \vert/K}, 
   \]

where we used the Markov property. Hence, by the choice of $C_3$ we get 
    \begin{align*}
            \mathbb P \left[ \exists i,j \leq n : \vert i-j\vert \geq C_3\log(n), \ d_X(w^{o}_i, w^{o}_j) \leq \frac{\vert i-j \vert}{C_3} \right] &\leq \sum_{i=0}^n \left( \sum_{l \geq C_3\log(n)}K e^{-l/K} \right) \\
            &\leq K n \cdot n^{-C_3/K} \sum_{t \geq 0} e^{-t/K} \\
            &\leq C_3n^{-r}. \qedhere
    \end{align*}
\end{proof}

The following result is a direct consequence of the deviation inequalities from \cite[Theorem 1.1]{MathieuSisto} and \cite[Theorem 1.4]{GS21} 

\begin{theorem}[Consequence of \cite{GS21, MathieuSisto}]
\label{thm:small_devns}

We consider one of the following set ups. 
\begin{itemize}
    \item Let $(w^*_{n})_{n}$ be a simple random walk on an acylindrically hyperbolic group $G$, 
    \item Let $(w^*_{n})_{n}$ be a tame Markov chain on a group satisfying \cite[Assumption 1 or Assumption 2]{GS21}, for example a hyperbolic group, or relatively hyperbolic group or the fundamental group of a graph of groups with nilpotent undistorted edge groups,
    \item Let $(w^*_{n})_{n}$ be a tame quasi-homogenenous Markov chain on a well-behaved hierarchically hyperbolic group, for example the mapping class group of a finite-type surface or an extra-large Artin group.
\end{itemize}
Then for all $D>0$, there exists a constant $C_1$ such that the following holds for all $n \in \mathbb N$
\[
\mathbb P \left[ \forall  k \leq n \colon \sup_{\alpha \in QG_{(D,D)}(1, Z_n)} d_S(Z_k, \alpha) \geq l  \right] \leq C_1ne^{-l/C_1}
\]
the supremum is taken over the set $QG_{(D,D)}(x,y)$ of all $(D,D)$-quasi-geodesics from $x$ to $y$ in $G$.
\end{theorem}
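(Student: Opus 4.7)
The plan is to assemble the claim from two inputs: the pointwise deviation inequalities supplied by the cited papers in each of the three setups, together with a simple union bound over the time index $k$. The constant $C_1$ will be obtained by combining the constants coming from those deviation inequalities and absorbing the linear factor $n$ that arises from the union bound.

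First I would fix $D \geq 1$ and the ambient setup, and invoke the appropriate cited result to extract a \emph{pointwise} deviation inequality of the form
\[
\mathbb{P}\bigl[\sup_{\alpha \in QG_{(D,D)}(1, Z_n)} d_S(Z_k, \alpha) \geq l\bigr] \leq C' e^{-l/C'},
\]
valid for every individual $k \leq n$, with a constant $C' = C'(D)$ independent of $n$, $k$, and $l$. For simple random walks on acylindrically hyperbolic groups this is the content of \cite[Theorem 1.1]{MathieuSisto}; for tame Markov chains satisfying Assumption 1 or 2 of \cite{GS21} it is \cite[Theorem 1.4]{GS21}; and for tame quasi-homogeneous Markov chains on well-behaved hierarchically hyperbolic groups the analogous deviation inequality is available within the framework of \cite{InducedQIs}. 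Passing from a bound against a single quasi-geodesic (as typically stated in those references) to the sup over the whole family $QG_{(D,D)}(1,Z_n)$ is handled by Morse-type stability in each geometry, which only enlarges $C'$ by a bounded factor.

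Second, I would union bound over the $n+1$ possible values of $k$. Writing
\[
\mathbb{P}\bigl[\exists k \leq n : \sup_{\alpha \in QG_{(D,D)}(1,Z_n)} d_S(Z_k, \alpha) \geq l\bigr] \leq \sum_{k=0}^{n} C' e^{-l/C'} \leq (n+1)\, C' e^{-l/C'},
\]
and choosing $C_1$ large enough in terms of $C'$ turns the right-hand side into the desired $C_1 n e^{-l/C_1}$.

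The only step that is not entirely routine is the upgrade from a pointwise deviation bound against \emph{some} quasi-geodesic between $1$ and $Z_n$ to a uniform bound against \emph{every} $(D,D)$-quasi-geodesic between these two points; this is the step where one has to be a little careful, and I expect it to be the main obstacle. It is settled by the standard Morse lemma in the (relatively) hyperbolic and acylindrically hyperbolic settings, and by its hierarchically hyperbolic analogue in the last bullet, each of which bounds the Hausdorff distance between any two $(D,D)$-quasi-geodesics with common endpoints by a constant depending only on $D$.
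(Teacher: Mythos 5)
Your overall strategy coincides with the paper's proof: in each of the three setups one quotes the deviation inequality from \cite[Theorem 1.1]{MathieuSisto}, \cite[Theorem 1.4]{GS21} (via \cite{InducedQIs} for the hierarchically hyperbolic case), which is pointwise in $k$ with constants independent of $n$, $k$, $l$, and then sums over $k \leq n$ with a union bound, absorbing the factor $n$ and the constants into $C_1$. (Your union bound in fact controls the event ``$\exists k \leq n$ with $\sup_\alpha d_S(Z_k,\alpha) \geq l$'', which is the intended reading of the statement and the one used later for the event $\mathcal A_n$; this is consistent with the paper.)

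However, the step you single out as the main obstacle --- upgrading a bound against a single quasi-geodesic to the supremum over all of $QG_{(D,D)}(1,Z_n)$ via ``Morse-type stability'' --- is justified incorrectly, and as stated it would fail. In the settings covered by the theorem the ambient Cayley graph is in general not hyperbolic (acylindrically hyperbolic groups, relatively hyperbolic groups with non-hyperbolic peripherals, mapping class groups, graphs of groups with nilpotent edge groups), and there is no Morse lemma for arbitrary $(D,D)$-quasi-geodesics there: already in $\mathbb{Z}^2$ (which embeds in several of these groups) two $(D,D)$-quasi-geodesics with the same endpoints can be at Hausdorff distance linear in the distance between the endpoints, so the Hausdorff distance is not bounded in terms of $D$ alone. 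The reason the argument nevertheless goes through is that no such upgrade is needed: the cited deviation inequalities are proved precisely in the uniform form, i.e.\ as bounds on $\mathbb P\bigl[\sup_{\alpha \in QG_{(D,D)}(1,Z_n)} d_S(Z_k,\alpha) \geq l\bigr]$ for every fixed $k \leq n$, with the supremum over all $(D,D)$-quasi-geodesics built into the statement. With that correction, your proof reduces to exactly the paper's one-line argument: quote the uniform pointwise inequality and apply the union bound over $k \leq n$.
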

\begin{proof}
This follows immediately from the fact that in each of these set ups we have deviation inequalities by \cite[Theorem 1.1]{MathieuSisto} and \cite[Theorem 1.4]{GS21} (where for the last bullet point this is true via  \cite[Theorem 3]{InducedQIs}), hence summing up with a union bound over all $k \leq n $ gives the desired result.
\end{proof}

\begin{theorem}[]\label{thm:ah_rdehn}
Let $G$ be a finitely presented group with presentation $\calp = \langle S|R\rangle$ and with Dehn function $\delta_G$ at most polynomial. Let $(w_n^*)_n$ be a tame Markov chain on $G$ that satisfies the conclusion of Theorem \ref{thm:small_devns}. Then, for all quasi-geodesic combings $\alpha$ of $G$, we have that 
\[
R\delta_{(G,(w_n^*), \alpha)}(n) \preccurlyeq \frac{n}{\log(n)}\delta_G(\log(n)) \preccurlyeq n^2.
\]
\end{theorem}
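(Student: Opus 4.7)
The plan is to combine the deviation inequality from Theorem \ref{thm:small_devns} with a decomposition of the null-homotopic filling word $W_{x_1, \ldots, x_n}$ into roughly $n/\log n$ short sub-loops of length $O(\log n)$ each. By left-invariance of the Dehn function, I will assume $o = 1$, so that the combing line $\alpha_n = \alpha(x_n)$ forms the ``spine'' of a loop built from the combing increments along the trajectory $x_k = w_k^1$ of the Markov chain and closed by $\alpha_n^{-1}$.

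Let $D$ be the quasi-geodesic constant of $\alpha$, $K$ the jump bound of the Markov chain, and write $\delta_G(m) \leq A m^d$. First, I apply Theorem \ref{thm:small_devns} with deviation parameter $\ell = C_1(d+2)\log n$ to the quasi-geodesic combing line $\alpha_n$. This produces an event $E_n$ of probability at least $1 - n^{-(d+1)}$ on which every $x_k$ lies within distance $C_2 \log n$ of $\alpha_n$. On $E_n$, I choose indices $0 = k_0 < k_1 < \cdots < k_M = n$ with $k_{j+1}-k_j \asymp \log n$ and $M \asymp n/\log n$; for each $j$ I pick a point $p_j$ on $\alpha_n$ with $d_S(x_{k_j}, p_j) \leq C_2\log n$, together with a geodesic ``rung'' $\sigma_j$ joining them. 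Each of the $M$ sub-loops consists of: the $\log n$ consecutive combing increments from $x_{k_j}$ to $x_{k_{j+1}}$ (total length $O(\log n)$ since each increment has length $\leq DK+D$); the rung $\sigma_{j+1}$; the sub-arc of $\alpha_n$ from $p_{j+1}$ back to $p_j$, of length $O(\log n)$ since $d_S(p_j,p_{j+1}) = O(\log n)$ and $\alpha_n$ is a $(D,D)$-quasi-geodesic; and the reversed rung $\sigma_j^{-1}$. Each such sub-loop has length $O(\log n)$ and hence area at most $\delta_G(c\log n)$. Since the rungs cancel in pairs when the sub-loops are concatenated, I obtain the bound $\area{\calp}{W_{x_1,\ldots,x_n}} \leq M \cdot \delta_G(c\log n) \preccurlyeq \frac{n}{\log n}\delta_G(\log n)$ on $E_n$.

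Off $E_n$ I use the trivial bound: the word $W_{x_1,\ldots,x_n}$ has length $O(n)$ because each combing increment has length $\leq DK+D$ and the spine satisfies $|\alpha_n| \leq D|x_n|_S + D = O(n)$, so $\area{\calp}{W_{x_1,\ldots,x_n}} \leq \delta_G(O(n)) = O(n^d)$. Since $\mathbb{P}[E_n^c] \leq n^{-(d+1)}$, the bad event contributes $O(n^{-1})$ to the expectation, which is absorbed into the main term. Taking the supremum over starting points gives the claimed bound; the trailing inequality $\frac{n}{\log n}\delta_G(\log n) \preccurlyeq n^2$ then follows from $\delta_G(\log n) \preccurlyeq (\log n)^d$ and the elementary fact $(\log n)^{d-1} \preccurlyeq n$ for any fixed $d$.

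The main obstacle is making the decomposition rigorous at the level of words rather than of paths in the Cayley graph. The increments $\alpha_i$ are concatenated in $W_{x_1,\ldots,x_n}$ with the required left-translations hidden in the group multiplication, so the geodesic rungs $\sigma_j$ must be inserted and cancelled via conjugation inside the area estimate, and one has to verify that the resulting $M$ short null-homotopic subwords have areas summing to an upper bound on $\area{\calp}{W_{x_1,\ldots,x_n}}$. This amounts to a standard Van Kampen diagram surgery but requires careful bookkeeping to ensure the sub-loops are genuinely null-homotopic words of controlled length.
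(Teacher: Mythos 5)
Your proposal is correct in outline and follows the same skeleton as the paper's proof: apply the deviation inequality of Theorem~\ref{thm:small_devns} with $l \asymp \log n$ to get a good event of probability $1 - O(n^{-(d+1)})$ on which the trajectory stays within $O(\log n)$ of the combing line, cut the trajectory into $\asymp n/\log n$ blocks of length $\asymp \log n$, close each block into a loop of length $O(\log n)$ using geodesic rungs to the spine and a sub-arc of the spine, bound each such loop by $\delta_G(O(\log n))$, and off the good event use the trivial bound $\delta_G(O(n)) = O(n^d)$, which the small probability absorbs. The genuine difference is that the paper uses a second probabilistic ingredient, Lemma~\ref{lem:subwalks_linear_progress} (linear progress of subwalks of logarithmic length, the event $\mathcal B_n$), to prove Claim~\ref{claim:projections_increase} that the projection points $p_{k_i}$ advance monotonically along the spine; this is what the paper uses to justify that filling the sub-loops fills $W_{x_1,\ldots,x_n}$ and to bound the number of blocks. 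You bypass both: the number of blocks is $\asymp n/\log n$ by construction, the length of each spine sub-arc is controlled directly by the $(D,D)$-quasi-geodesic property (arc-length between two points of $\alpha_n$ is at most $D\,d_S + D^2$, and $d_S(p_j,p_{j+1}) = O(\log n)$ by the rungs and bounded jumps), and the passage from sub-loop areas to $\area{\calp}{W}$ is the purely algebraic fact that $W$ is freely equal to a product of conjugates of the sub-loop words (the rungs cancel and the spine sub-arcs telescope to the full spine, with the rungs at $k_0=0$ and $k_M=n$ taken trivial), which does not require the $p_j$ to be ordered along $\alpha_n$. This telescoping subadditivity is exactly the ``surgery'' you flag at the end, and it does go through, so your route is a legitimate and in fact somewhat leaner argument than the paper's.

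Two small cautions. First, the reduction ``by left-invariance assume $o=1$'' is not justified for a general tame Markov chain, which need not be homogeneous; the correct statement is that every step of the estimate (the deviation bound of Theorem~\ref{thm:small_devns}, the block decomposition, and the trivial bound) is uniform in the starting point $o$, which is how the paper phrases it before taking the supremum in Definition~\ref{def:randomdehn}. Second, make sure the spine you fill against is the combing line joining the start and end points of the trajectory (the one to which the deviation inequality applies), and that the final, possibly shorter, block ending at time $n$ is included so the telescoping closes up at the endpoint of $\alpha$.
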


\begin{proof}[Proof of \cref{thm:ah_rdehn}]

We show that for all $D>0$, there exists a constant $C>0$ such that for all $D$-quasi-geodesic combings $\alpha$ of $G$ we have that 
\[ 
R\delta_{(G,(w_n^*), \alpha)}(n) \leq  C\frac{n}{\log(n)}\delta_G(\log(n))+Cn+C. 
\]

Let $d$ be the degree of a polynomial bounding (from above) the Dehn function of $G$, the existence of $d$ is guaranteed by our assumptions. Let $(w_n^*)_n$ be one of the above stochastic processes on the corresponding group $G$, the important part is that these satisfy the conclusion of \cref{thm:small_devns}. Let $o \in G$, $n \in \mathbb N$, and $\alpha_{o, w^0_n}$ be the quasi-geodesic line from $o$ to $w^{o}_n$, which we assume to be a $D$-quasi-geodesic. 

We now provide a bound which is uniform over $o \in G$. Let $C_1$ be the constant from Theorem \ref{thm:small_devns} associated to $D$, which we increase to make sure that $n^dC_1n^{1-C_1} \to 0$. Let $\mathcal A_n$ be the event 
	\[ 
		``\forall k \leq n : d_S(\alpha_{o, w^0_n}, w_k^{o}) \leq C_1^2 \log(n) ''. 
	\] 
By Theorem \ref{thm:small_devns} the event $\mathcal A_n$ holds with probability $\mathbb P \left[\mathcal A_n \right] \geq 1-C_1n^{1-C_1} \to 1$.

We now work under the assumption that the event $\mathcal A_n$ holds, i.e. the event $\forall k \leq n : d_S(\alpha_{o, w^0_n}, w^{o}_k) \leq C_1^2\log(n)$. Hence, for all $k \leq n$, we can choose discrete geodesics $\gamma_k$ from $w^{o}_k$ to $\alpha_{o, w^0_n}$ of length at most $C^2_1\log(n)$. 

Let $p_k=\alpha_{o, w^0_n} \cap \gamma_k$. For two points $x,y \in \alpha_{o, w^0_n}$,  we write $\ell(\alpha_{o, w^0_n}\vert_{[x,y]})$ to denote the length, along $\alpha_{o, w^0_n}$, between $x$ and $y$.

Since $G$ is non-amenable, it follows that a tame Markov chain makes linear progress in $G$ (by the non-amenability condition of tameness). Therefore we can use Lemma \ref{lem:subwalks_linear_progress} to get that subwalks of logarithmic length also make linear progress in the Cayley graph of $G$ with respect to the chosen generating set. 

Let $C_3$ be as in Lemma \ref{lem:subwalks_linear_progress} for $r=d+1$ ($d$ being the polynomial degree bounding the Dehn function $\delta_G$ of $G$) and for all $i \geq 0$, define integers $k_i=\lceil 100iC_1^2C_3D^2\log(n) \rceil$ and let $t$ be such that $k_t \leq n$ but $k_{t+1} >n$.  Define $\mathcal B_n$ to be the complementary event to the defining event of the probability in Lemma \ref{lem:subwalks_linear_progress}, i.e. $\mathcal B_n$ is the event 
\[
\text{``$\forall  i,j \leq n : \vert i-j \vert < C_3 \log(n) \quad \text{or} \quad d_S(w^{o}_{i}, w^{o}_{j}) >  \frac{\vert i-j \vert}{C_3} $''}.
\]
By Lemma \ref{lem:subwalks_linear_progress}, we have that $\p \big[\mathcal B_n^C \big] \leq C_3n^{-k}$ and hence it makes sense to consider the case where $\mathcal B_n$ holds, as we now do in the following claim.

\begin{claim}
\label{claim:projections_increase}
   If the events $\mathcal A_n$ and $\mathcal B_n$ both hold, then for all $k_{i+1} <n $ we have $\ell(\alpha_{o, w^0_n}\vert_{[o,p_{k_{i+1}}]})-\ell(\alpha_{o, w^0_n}\vert_{[o,p_{k_{i}}]})>0$.
\end{claim}
\textit{Proof of Claim \ref{claim:projections_increase}.}
For a contradiction, assume that this is not the case. Hence, there exists $k_i$ such that $\ell(\alpha_{o, w^0_n}\vert_{[o,p_{k_{i+1}}]})-\ell(\alpha_{o, w^0_n}\vert_{[o,p_{k_{i}}]})\leq 0$. If this is the case, then 

\begin{subclaim}
\label{subclaim:later_on_close}
    There exists $j \geq k_{i+1}$ such that $\ell(\alpha_{o, w^0_n}\vert_{[p_j,p_{k_{i}}]}) \leq 10C_1^2D\log(n)$.
\end{subclaim}

\textit{Proof of \cref{subclaim:later_on_close}}
Let $q>k_{i+1}$ be the minimal index such that  $\ell(\alpha_{o, w^0_n}\vert_{[o,p_{q}]}) -\ell(\alpha_{o, w^0_n}\vert_{[o,p_{k_{i}}])}>0$ (which exists as $k_{i+1} <n$). If $\ell(\alpha_{o, w^0_n}\vert_{[p_q,p_{k_{i}}]}) \leq 10C_1^2\log(n)$ then $j=q$ proves the subclaim.
If $\ell(\alpha_{o, w^0_n}\vert_{[p_{q-1},p_{k_{i}}]}) \leq 10C_1^2\log(n)$ then $j=q-1$ proves the subclaim. If neither holds, then by $K$-bounded jumps of $(w^*_n)_n$ we get 
\begin{align*}
    K &\geq d(w^{o}_{q-1}, w^{o}_q)  \\
    &\geq D^{-1}\ell(\alpha_{o, w^0_n}\vert_{[p_{q-1},p_{q}]})-D-d(p_{q-1}, w^{o}_{q-1})-d(p_q, w^{o}_q) \\
    &\geq 20C_1^2\log(n)-D-2C_1^2\log(n),
\end{align*} 
a contradiction. This finishes the proof of \cref{subclaim:later_on_close}
\quad\hfill{$\diamondsuit$}
We are assuming that the event $\mathcal B_n$ holds, hence as $\vert j-k_i \vert \geq \vert k_{i+1}-k_i  \vert \geq C_3\log(n)$ we must have that $d_S(w^{o}_j, w^{o}_{k_i}) > \vert j-k_i \vert /C_3 \geq 100 C_1^2D^2\log(n)$. On the other hand, by Subclaim \ref{subclaim:later_on_close}, we also have that $d_S(w^{o}_j, w^{o}_{k_i}) \leq \ell (\gamma_j)+\ell(\gamma_{k_i})+ d_S(p_j, p_{k_i}) \leq 2C_1\log(n) + 10C_1^2D^2 \log(n)+D$, by the fact that $\alpha$ is a $D$-quasi-geodesic. A contradiction, thus proving Claim \ref{claim:projections_increase}. 
\hfill$\blacksquare$

Hence, the lengths $(\ell(\alpha_{o, w^0_n}\vert_{[o,p_{k_{i}}]}))_{k_i}$ are increasing. We further note that if $\mathcal B_n$ holds, then $d_S(w^{o}_{k_i}, w^{o}_{k_{i+1}}) \geq 100D^2C_1^2\log(n)$ and so $d_S(p_{k_i}, p_{k_{i+1}}) \geq 98C_1^2 \log(n)$. 

Thus under the assumption that events $\mathcal A_n$ and $\mathcal B_n$ hold we get, by \cref{claim:projections_increase} and $K$-bounded jumps of the Markov chain:
\begin{align*}
    \begin{split}
        Kn &\geq d_S(o,w^{o}_n) \geq \frac{1}{D}\ell(\alpha)-1   \\
        &\geq \frac{1}{D}\sum_{0\leq i\leq t} (\ell(\alpha_{o, w^0_n}\vert_{[p_{k_i},p_{k_{i+1}}]}))-1 \\
        &\geq \frac{t}{D}\Big(\frac{98C_1^2\log(n)}{D}-D \Big)-1.
    \end{split}
\end{align*}
Hence, $t \leq \frac{D^2(Kn+1)}{98C_1^2\log(n)-D}$. Further, for all $k_i$, consider loops $P_{k_{i}}$ to be defined by the concatenation of $\gamma^{-1}_{k_i}$, $[w_{k_i}, w_{k_{i+1}}], \gamma_{k_{i+1}}$, and $\alpha_{o, w^0_n}\vert_{[p_{k_i}, p_{k_{i+1}}]}$. The length of each loop $P_{k_i}$ is at most $300KC_1^2C_3D^2\log(n)$ and hence the area of each $P_{k_i}$ is at most $\delta_G( 300KC_1C_3D^2\log(n))$. By filling all of them, we get a filling for the null-homotopic word $W_{o, w_1^o, \ldots, w^o_n}$ (defined as in \cref{def:fill}).

Hence, in the case that the events $\mathcal A_n$ and $\mathcal B_n$ holds, we get that \begin{align*}
    \begin{split}
        \e \Big[ \Fill_{\calp, \alpha}(o, \dots, w^{o}_n) \Big\vert \mathcal A_n \cap \mathcal B_n \Big] &\leq \sum_{0 \leq i \leq t} \area{\calp}{P_{k_i}} \\
        &\leq t \delta_G(300KC_1^2C_3D^2\log(n)) \\
        &\leq \frac{D^2(Kn+1)}{98C_1^2\log(n)-D}\delta_G(300KC_1^2C_3D^2\log(n)).
    \end{split}
\end{align*}

And thus, by the law of total expectation

\begin{align*}
    \begin{split}
        \e \Big[ &\Fill_{\calp, \alpha}(o, \dots, w^{o}_n)  \Big] \\ &= \e \Big[ \Fill_{\calp, \alpha}(o, \dots, w^{o}_n) \Big\vert \mathcal A_n \cap \mathcal B_n \Big]\p[\mathcal A_n \cap \mathcal B_n]+\e \Big[ \Fill_{\calp, \alpha}(o, \dots, w^{o}_n)\Big\vert \mathcal A_n^C \cup \mathcal B_n^C \Big]\p[\mathcal A_n^C \cup \mathcal B_n^C] \\
        &\leq \frac{D^2(Kn+1)}{98C_1^2\log(n)-D}\delta_G(300KC_1^2C_3D^2\log(n)) + ((K+D)n+D)^d(C_1n^{1-C_1}+n^{-d-1}) \\
        &\leq C\Big(\frac{n}{\log(n)}\delta_G(\log(n))\Big)+Cn+C
    \end{split}
\end{align*}

for some $C>0$ big enough, where we use the fact that for two events $\mathcal A_n,\mathcal B_n$ we have $\p \Big[ \mathcal A_n \cap \mathcal B_n \Big] \geq 1-\p[\mathcal A_n^C]-\p[\mathcal B_n^C]$ and use Theorem \ref{thm:small_devns}, Lemma \ref{lem:subwalks_linear_progress} and the choice of $C_1$ to conclude.

This bound is independent of $o$ and hence taking the supremum over all $o \in G$ concludes the proof of Theorem \ref{thm:ah_rdehn}, by the usual order of functions defined above. Therefore,
	\begin{equation}\label{eq:general-bound-random}
		R\delta_{(G,(w_n^*), \alpha)}(n) \preccurlyeq \frac{n}{\log n} \delta_G(\log n)
	\end{equation}

By hypothesis, we have that $\delta_G(n) \preccurlyeq n^d$. Since $(\log n)^\alpha \preccurlyeq n$ for $\alpha >0$, it follows from \eqref{eq:general-bound-random} that
	\[
		R\delta_{(G,(w_n^*), \alpha)}(n) \preccurlyeq n (\log n)^{d-1} \preccurlyeq n^2
	\]
In particular, if $G$ is hyperbolic, that is if $d=1$, then we have that $R\delta_{(G,(w_n^*), \alpha)}(n) \preccurlyeq n$. Whereas, if $G$ is not hyperbolic and $\delta_G(n) \asymp n^2$, then 
	\[
		R\delta_{(G,(w_n^*), \alpha)}(n) \preccurlyeq n \log n \prec \delta_G(n);
	\] 
if $G$ is not hyperbolic and $n^2 \prec \delta_G(n) \preccurlyeq n^d$, then $R\delta_{(G,(w_n^*), \alpha)}(n) \prec \delta_G$. 
\end{proof}

As a consequence of \cref{thm:ah_rdehn} we obtain the main results stated in the introduction, which we state again for the reader's convenience.

\begin{corollary}[\cref{thmintro:ah_rdehn}]
Let $G$ be a finitely presented acylindrically hyperbolic group with Dehn function $\delta_G$ at most a polynomial of degree $d \geq 1$. Let $(Z_n)_n$ be a simple random walk on $G$. For every quasi-geodesic combing $\alpha$ of $G$, we have that
	\[
		R\delta_{(G,(Z_n), \alpha)}(n) \preccurlyeq n^2.
	\]
Furthermore, if $G$ is hyperbolic, then $R\delta_{(G,(Z_n), \alpha)}(n) \preccurlyeq n$. Else, if $d=2$ then $R\delta_{(G,(Z_n), \alpha)}(n) \preccurlyeq n \log n \prec \delta_G (n)$, and if $d>2$, then $R\delta_{(G,(Z_n), \alpha)} \prec \delta_G$.
\end{corollary}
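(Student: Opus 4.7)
The proof is a direct application of \cref{thm:ah_rdehn}, so the task reduces to verifying the hypotheses of that theorem for simple random walks on acylindrically hyperbolic groups and then unpacking the resulting bound in each polynomial regime. First, since $G$ is acylindrically hyperbolic it is (in particular) non-amenable, and hence by \cref{lem:random_walks_are_tame} the simple random walk $(Z_n)_n$ is a tame (in fact tame quasi-homogeneous) Markov chain on $G$. The first bullet of \cref{thm:small_devns} is precisely the statement that simple random walks on acylindrically hyperbolic groups satisfy the deviation inequality used in the proof of \cref{thm:ah_rdehn}, so every hypothesis of that theorem is met and we obtain
\[
    R\delta_{(G,(Z_n), \alpha)}(n) \preccurlyeq \frac{n}{\log(n)}\,\delta_G(\log(n)) \preccurlyeq n^2,
\]
where the second inequality uses $\delta_G(n) \preccurlyeq n^d$ together with the elementary fact that $(\log n)^{d-1} \preccurlyeq n$ for every fixed $d \geq 1$.

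For the sharper statements I would split into cases. When $G$ is hyperbolic its Dehn function is linear, so one may take $d = 1$ and the general bound simplifies immediately to $R\delta \preccurlyeq n$. When $G$ is not hyperbolic, the Gromov--Ol'shanskii--Bowditch characterisation of hyperbolicity by subquadratic Dehn function forces $\delta_G \succcurlyeq n^2$, hence $d \geq 2$. For $d = 2$ the general bound yields $R\delta \preccurlyeq n \log n$, and since $\delta_G \succcurlyeq n^2 \succ n \log n$ we conclude $R\delta \prec \delta_G$. For $d > 2$ the bound becomes $R\delta \preccurlyeq n(\log n)^{d-1}$, which is strictly dominated by any genuine polynomial bound of degree $> 2$ on $\delta_G$, again giving $R\delta \prec \delta_G$. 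There is essentially no obstacle in this deduction; the content of the corollary lies entirely in \cref{thm:ah_rdehn}, and what remains is the routine asymptotic bookkeeping above together with the observation that non-hyperbolicity upgrades the trivial lower bound $d \geq 1$ to $d \geq 2$.
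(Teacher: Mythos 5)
Your proposal is correct and follows essentially the same route as the paper: the paper's proof is exactly the observation that simple random walks are tame (quasi-homogeneous) Markov chains by \cref{lem:random_walks_are_tame}, that the first case of \cref{thm:small_devns} applies, and then an application of \cref{thm:ah_rdehn}, with the same case analysis on $d$ that the paper carries out inside the proof of \cref{thm:ah_rdehn}. One small wording fix: in the case $d>2$ the conclusion $R\delta \prec \delta_G$ should be obtained by comparing with the \emph{lower} bound coming from non-hyperbolicity, namely $n(\log n)^{d-1} \prec n^2 \preccurlyeq \delta_G$, rather than with the polynomial upper bound of degree $d$ on $\delta_G$ (which by itself would not yield $R\delta \prec \delta_G$).
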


\begin{proof}
    This follows immediately from Theorem \ref{thm:small_devns} and Theorem \ref{thm:ah_rdehn} by noting that the random walks considered are tame Markov chains by Lemma \ref{lem:random_walks_are_tame}.
\end{proof}

\begin{corollary}[\cref{thmintro:tame_MC}]
Let $G$ be one of the following:
\begin{itemize}
    \item A non-elementary hyperbolic group;
    \item a non-elementary relatively hyperbolic group; 
    \item an acylindrically hyperbolic $3$-manifold group;
    \item the fundamental group of a graph of groups with nilpotent undistorted edge groups and at most polynomial Dehn function
\end{itemize}
Let $(w_n^\ast)$ be a tame Markov chain on $G$. Then, for any quasi-geodesic combing $\alpha$ of $G$ we have that: 
\[
R\delta_{(G,(w^*_n), \alpha)}(n) \preccurlyeq  \frac{n}{\log(n)}\delta_G(\log(n)) \preccurlyeq n^2.
\]
In particular, if $G$ is not hyperbolic, then $R\delta_{(G,(w^*_n), \alpha)}(n) \preccurlyeq n \log n \prec \delta_G(n)$. 
\end{corollary}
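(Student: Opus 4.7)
The plan is to deduce the corollary directly from \cref{thm:ah_rdehn}, whose two hypotheses are that (a) the tame Markov chain on $G$ satisfies the deviation conclusion of \cref{thm:small_devns}, and (b) the Dehn function $\delta_G$ is at most polynomial. So the proof amounts to checking these two inputs for each of the four bullet points.

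For (a), I would simply invoke the second bullet of \cref{thm:small_devns}, which says that tame Markov chains on groups satisfying Assumption 1 or 2 of \cite{GS21} meet the deviation bound. Non-elementary hyperbolic groups and non-elementary relatively hyperbolic groups are explicitly listed there, and so are fundamental groups of graphs of groups with nilpotent undistorted edge groups. For the acylindrically hyperbolic 3-manifold bullet, I would use the standard structure theory: such groups either are hyperbolic or split as a graph of groups with (virtually) nilpotent undistorted edge groups coming from tori/Klein bottles, so that they fall under the same GS21 framework and hence satisfy \cref{thm:small_devns}.

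For (b), hyperbolic groups have linear Dehn function, which is polynomial; the fourth bullet carries at most polynomial Dehn function as an explicit hypothesis; relatively hyperbolic groups in our list are assumed to have at most polynomial Dehn function (inherited from polynomial peripherals via Osin's combination results); and acylindrically hyperbolic 3-manifold groups have at most quadratic Dehn function by the known classification (they are either hyperbolic or virtually split off a Seifert/graph-manifold piece producing a quadratic filling). Once both (a) and (b) are in place, plugging into \cref{thm:ah_rdehn} yields
\[
R\delta_{(G,(w^*_n), \alpha)}(n) \preccurlyeq \frac{n}{\log n}\delta_G(\log n) \preccurlyeq n^2,
\]
uniformly over $o \in G$, and the ``in particular'' clause follows exactly as in the last paragraph of the proof of \cref{thm:ah_rdehn}: if $\delta_G(n) \preccurlyeq n^d$ with $d\geq 2$, then $\frac{n}{\log n}\delta_G(\log n) \preccurlyeq n(\log n)^{d-1}$, which is $\prec \delta_G(n)$ whenever $G$ is not hyperbolic.

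The main obstacle I anticipate is the acylindrically hyperbolic 3-manifold bullet: verifying simultaneously that a suitable GS21 assumption applies and that the Dehn function is indeed polynomial requires a short structural argument using the JSJ/geometrisation decomposition rather than a direct quote. All other bullets reduce to bookkeeping and direct citation of the earlier statements of the paper.
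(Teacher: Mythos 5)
Your proposal is correct and takes essentially the same route as the paper: check that each listed group satisfies \cite[Assumption 1 or Assumption 2]{GS21} so that \cref{thm:small_devns} applies, note the polynomial (indeed at most quadratic) Dehn function bounds, and then apply \cref{thm:ah_rdehn}, with the ``in particular'' clause following as in that theorem's proof. The only difference is in the justification of the quadratic Dehn bound for acylindrically hyperbolic $3$-manifold groups: the paper simply cites the automatic structure of such groups \cite[Theorem 12.4.7]{epstein} (possible since their geometric decomposition contains no $Sol$ or $Nil$ pieces), rather than your sketched JSJ/geometrisation argument, which would indeed need to be fleshed out if one did not quote that result.
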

\begin{proof} 
    This follows from Theorem \ref{thm:small_devns} as all these groups satisfy either \cite[Assumption 1 or Assumption 2]{GS21} and hence we can apply Theorem \ref{thm:ah_rdehn} by noting that all acylindrically hyperbolic $3$-manifold groups have at most quadratic Dehn function (by the fact that they have an automatic structure \cite[Theorem 12.4.7]{epstein} as they do not contain $Sol$ or $Nil$ components in their geometric decomposition), to conclude. 
\end{proof}

\begin{corollary}[\cref{thmintro:cor}]
    Let $G$ be one of the following:
    \begin{itemize}
        \item the mapping class group of a finite type surface;
        \item an extra-large Artin group; 
        \item or more generally a well-behaved hierarchically hyperbolic group (in the sense of \cite{InducedQIs}).
    \end{itemize}
    Let $(w_n^\ast)$ be a tame quasi-homogeneous Markov chain on $G$. Then, for any quasi-geodesic combing $\alpha$ of $G$ we have that:
    \[
    	R\delta_{(G,(w^*_n), \alpha)}(n) \preccurlyeq  \frac{n}{\log(n)} \delta_G(\log(n)) \preccurlyeq n^2.
    \]
    In particular, if $G$ is not hyperbolic, then $R\delta_{(G,(w^*_n), \alpha)}(n) \preccurlyeq n \log n \prec \delta_G(n)$.
\end{corollary}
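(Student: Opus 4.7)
The plan is to deduce this corollary as an immediate application of \cref{thm:ah_rdehn} combined with the third bullet point of \cref{thm:small_devns}. The structural logic is the same as in the previous two corollaries: we simply need to verify the two hypotheses of \cref{thm:ah_rdehn}, namely that $(w_n^\ast)$ satisfies the conclusion of \cref{thm:small_devns}, and that $G$ has at most polynomial Dehn function.

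For the first hypothesis, the third bullet of \cref{thm:small_devns} applies verbatim: mapping class groups of finite-type surfaces and extra-large Artin groups are listed there as examples of well-behaved hierarchically hyperbolic groups in the sense of \cite{InducedQIs}, and for tame quasi-homogeneous Markov chains on such groups the deviation inequality holds via \cite[Theorem 3]{InducedQIs}. So no additional work is needed for this part of the hypothesis.

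For the second hypothesis, I would cite that each of the groups in the list has at most quadratic Dehn function. For mapping class groups this is classical (Mosher's automatic structure). For extra-large Artin groups and more generally well-behaved HHGs, quadratic Dehn function is established in the HHG literature (Behrstock--Hagen--Sisto); the key point is that $G$ being a well-behaved HHG already forces $\delta_G$ to be at most polynomial, which is precisely what \cref{thm:ah_rdehn} requires.

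With both hypotheses in place, \cref{thm:ah_rdehn} directly gives
\[
R\delta_{(G,(w^*_n), \alpha)}(n) \preccurlyeq \frac{n}{\log(n)}\delta_G(\log(n)) \preccurlyeq n^2,
\]
and the ``in particular'' clause follows from the final paragraph of the proof of \cref{thm:ah_rdehn}: whenever $\delta_G(n) \asymp n^2$ one obtains $R\delta \preccurlyeq n \log n \prec \delta_G$, and otherwise $\delta_G$ is strictly superquadratic but still polynomial, so the inequality $R\delta \prec \delta_G$ follows from $(\log n)^{d-1} \prec n^{d-1}$. There is really no main obstacle to overcome here; the corollary is a packaging result, and the only point requiring attention is to make sure that the Dehn function bound needed as input to \cref{thm:ah_rdehn} is documented for each class on the list.
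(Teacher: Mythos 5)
Your proposal is correct and follows essentially the same route as the paper: verify that tame quasi-homogeneous Markov chains on well-behaved HHGs satisfy the conclusion of \cref{thm:small_devns} (third bullet), invoke the at-most-quadratic Dehn function of hierarchically hyperbolic groups (the paper cites \cite[Corollary 7.5]{BHS1}), and apply \cref{thm:ah_rdehn}. The only cosmetic difference is your extra (vacuous, given the quadratic bound) case distinction in the ``in particular'' clause, which the paper does not need.
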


\begin{proof}
We know that tame quasi-homogenenous Markov chains satisfy the conclusion of Theorem \ref{thm:small_devns}. Hence, applying Theorem \ref{thm:ah_rdehn} and using the fact that hierarchically hyperbolic groups have at most quadratic Dehn function \cite[Corollary 7.5]{BHS1} concludes the proof.
\end{proof}

\bibliography{main}
\bibliographystyle{alpha}

\end{document}